\documentclass[letterpaper]{scrartcl}
\usepackage[margin=1.1in]{geometry}
\title{Biography of Paul Erd\H{o}s and discussion of his proof of Bertrand's Postulate}
\author{Meredith M. Paker \\ April 6, 2016 \thanks{This essay was prepared for a History of Mathematics class at the University of Georgia in 2016 and is posted here for others to reference. The essay is not peer-reviewed and has not been updated since 2016. The biographical details rely heavily on other sources especially Schechter's biography of  Erd\H{o}s, \textit{My Brain is Open: The Mathematical Journeys of Paul Erd\H{o}s} (1998). All errors are my own.}
 \date{\vspace{-3.2ex}}}

\usepackage[utf8]{inputenc}
\usepackage[english]{babel}

\usepackage{amsmath}
\usepackage{amsthm}
\usepackage{amssymb}
\usepackage{enumerate}
\usepackage{microtype}
\usepackage{url}
\usepackage{verbatim}
\newcommand{\Ss}{\mathcal{S}}
\newcommand{\Pp}{\mathcal{P}}

% the spacing of \pmod under sum-class symbols is awful
% is there a way to deal with that? / should we?

\addtolength{\oddsidemargin}{-.25in}
\addtolength{\evensidemargin}{-.25in}
\addtolength{\textwidth}{0.5in}
\addtolength{\topmargin}{-.25in}
\addtolength{\textheight}{0.25in}
\setlength{\footskip}{25pt}

\newtheorem{theorem}{Theorem}[section]
\newtheorem{corollary}[theorem]{Corollary} 
\newtheorem{lemma}[theorem]{Lemma}

\begin{document}
\maketitle

%\begin{abstract} \noindent end{abstract}

\section{Erd\H{o}s' life}

Paul Erd\H{o}s (1913 - 1996) was the most prolific and influential mathematician of the twentieth century and one of the most important mathematicians of all time. In over one thousand papers with hundreds of different coauthors, Erd\H{o}s contributed to dozens of fields, some of which owe their existence to his work \cite{encyclopedia}. To Erd\H{o}s, the meaning of life was ``to prove and conjecture and to keep the SF's score low''\cite[p. 70]{open}. One of many terms and abbreviations he coined, ``SF,'' or ``Supreme Fascist,'' is what he called the imaginary God-like figure that authored ``The Book'' of the most elegant mathematical proofs in the universe and who keeps these proofs initially hidden from mathematicians \cite[pp. 50-57]{hoffman}. By proving everything a mathematician could, he or she could limit the number of points the SF ``won'' in this imaginary tug-of-war \cite[pp. 70-71]{open}. Driven by this life-affirming desire to uncover the most beautiful secrets of math, Erd\H{o}s worked constantly with everyone he met, contributing important results until the day he died and embodying a new philosophy and approach to the subject.

\subsection{Brief Biography}
Paul Erd\H{o}s was born on March 26, 1913, in Budapest, Hungary, to high school math teachers Lajos Erd\H{o}s and Anna Wilhelm \cite{encyclopedia}\cite[p. 24]{open}. His mother's fear of disease kept Erd\H{o}s home from school for much of his early life, which offered him ample time to consider the properties of numbers while learning from her and his governess \cite{encyclopedia}\cite[p. 11]{boy}. Famously, he ``invented'' negative numbers at the age of four, saying that 100 minus 250 was ``150 below zero!'' \cite[p. 29]{open}. He is also noted for impressing his parents' friends as a small child by calculating the exact number of seconds they had been alive \cite[p. 6]{boy}.

\medskip

 When Erd\H{o}s was 10, his father returned from a six-year imprisonment in a Siberian labor camp and began teaching Erd\H{o}s more sophisticated mathematics \cite[p. 43]{open}. During this time, Erd\H{o}s developed a love for prime numbers which was heightened when his father shared with him Euclid's elegant proof that there are infinitely many primes. Erd\H{o}s would begin proper schooling at age 11 in the sixth grade, where he had considerable academic success and continued pursuing deeper mathematical questions on the side \cite[p. 51]{open}.

\medskip

A particularly important influence in Erd\H{o}s' early life was the Hungarian mathematics journal for high schoolers \emph{K\"{o}Mal}. In addition to articles written by prominent mathematicians, the journal contained challenging contest problems. Students from all over Hungary aspired to mail in the neatest solutions to these problems in order to have their picture printed in the magazine \cite[p. 35]{boy}. In 1926, at age 13, Erd\H{o}s' picture was printed in \emph{K\"{o}Mal} along with his future friend Paul Tur\'{a}n after they were the only two respondents who had correctly solved one of the challenge problems \cite[pp. 55-56]{open}.

\medskip

In 1930, Erd\H{o}s began his undergraduate studies at the Hungarian university P\'{a}zm\'{a}ny P\'{e}ter Tudom\'{a}nyegyetem. While at university, Erd\H{o}s contributed some of his best work, including his proof of Bertrand's Postulate, discussed in detail below. He also joined a community of impressive Hungarian mathematicians, beginning his lifelong commitment to collaboration. He and his friends Esther Klein, George Szekeres, and Paul Tur\'{a}n often met at the Statue of Anonymous in Budapest to discuss mathematics \cite[p. 35]{boy}. These discussions led to many important results, including the foundations of Ramsey Theory. After studying at P\'{a}zm\'{a}ny for four years, Erd\H{o}s graduated with a PhD in pure mathematics at the young age of 21 \cite{encyclopedia}.

\medskip

In 1934, he was awarded a postdoctoral fellowship at the University of Manchester, where he continued his research for four years. In 1938, largely due to Hitler's ascension to power in Germany, Erd\H{o}s moved to the United States, where he was offered a one-year position at the Institute for Advanced Study at Princeton \cite{encyclopedia}. When, to his dismay, he was unable to renew his fellowship there, he began to travel the globe, working on questions in numerous fields while collaborating with hundreds of mathematicians from around the world. He held short positions at various universities in the United States such as Penn, Notre Dame, Purdue, Stanford, and Johns Hopkins. He also traveled extensively in dozens of countries including China, Israel, and Australia \cite{encyclopedia}.

\medskip

Throughout these travels, he subsisted primarily on the generosity of his collaborators. He would arrive in a city, often unannounced, and hope to be housed and fed for days or even weeks. Erd\H{o}s had never developed the skills he needed to take care of himself---his kind mother had done his laundry, paid his bills, and cooked him meals for much of his life \cite[p. 23]{boy}. It is even said that when Erd\H{o}s left his mother's care for the first time on a visit to Cambridge, he did not know how to butter the bread he was served at mealtime \cite[p. 92]{open}. Deficiencies like these made Erd\H{o}s a difficult houseguest. In one anecdote that Hungarian mathematician J\'anos Pach recounts, Erd\H{o}s could not figure out how to open a carton of tomato juice. So, after some reflection, he ``decided to get the juice out of the carton by stabbing it with a big knife," \cite[p. 21]{hoffman}, which certainly left a mess! But for all of the annoyances a visit from Erd\H{o}s could bring, his hosts could be certain that they would be inspired and challenged by the work they would do with him. When Erd\H{o}s would rise in the middle of the night, wake up his host, and declare ``My brain is open!" his friends would know that incredible mathematical work was about to occur \cite[p. 24]{boy}.

\medskip

Despite needing the support of his friends and colleagues, Erd\H{o}s proved to be incredibly generous. Though he never had much money or many possessions, if he heard of an aspiring mathematician in need, he would send whatever he had to help. When he was awarded the Wolf Prize, he famously saved only \$720 of his \$50,000 award, using the rest to establish a scholarship at Technion University in Israel in his mother's memory \cite[p. 165]{open}. He also created a series of prizes for anyone who could prove open conjectures that he was interested in, which was an endeavor that his friend Ron Graham managed for him later in his life \cite[pp. 192-196]{open}.

\medskip

Though Erd\H{o}s never married or had children of his own, he loved talking with and entertaining the children of his colleagues. He called small children ``epsilons'' and impressed them by dropping a pill bottle and catching it again before it hit the floor \cite[p. 22, 36]{boy}. In these anecdotes, his pill bottle likely contained amphetamines, which he took daily in addition to drinking large quantities of coffee \cite[p. 196]{open}. Under the influence of these stimulants, Erd\H{o}s often worked on math for twenty hours a day \cite{encyclopedia}, leading to the quip attributed to him that ``a mathematician is a machine for turning coffee into theorems" \cite[p. 15]{open}.

\medskip

His heavy amphetamine use caught up with him in the 1980s when he developed heart troubles and had to undergo a cornea transplant \cite[pp. 241-246]{hoffman}. However, by that time, Erd\H{o}s had been joking about his mortality for years. Before he turned sixty, he would sign his name with the addendum ``PGOM,'' which stood for ``Poor Great Old Man.'' At age sixty he added ``LD'' for ``Living Dead,'' and at age sixty-five he added ``AD'' for ``Archaeological Discovery.'' When he made it to age seventy, he added ``LD'' for ``Legally Dead.'' Finally, his last addition at age seventy-five---``CD'' for ``Counts Dead''---represented the fact that he was dropped from the membership rolls of the Hungarian Academy of Sciences for being too old \cite[p. 41]{open}. On September 20, 1996, he eventually did pass away at age 83 at a conference in Warsaw, Poland. In true Erd\H{o}s fashion, he was contributing mathematical insights up to his final hours \cite{encyclopedia}.

\medskip

Erd\H{o}s' mathematical contributions were recognized throughout his life in a number of prestigious awards. Most notably, in 1984 he was awarded the Wolf Prize, which is one of the highest awards in mathematics. In 1951, he was also awarded the Cole Prize for his work on prime numbers, and he was an honored attendee of the first International Conference on Graph Theory in 1959, which was a field his work helped legitimize \cite{encyclopedia}\cite[pp. 153-159]{open}. Additionally, Erd\H{o}s was inducted into the Hungarian Academy of Sciences in 1956, which led to further fruitful collaborations \cite{encyclopedia}. He was also inducted into the United States National Academy of Sciences in 1979 and into the British Royal Society in 1989. Erd\H{o}s' numerous contributions continue to be recognized today by the prestige associated with having a low ``Erd\H{o}s number,'' which is a measure reflecting the degrees of separation from Erd\H{o}s. His direct collaborators all bear an Erd\H{o}s number of 1, while their collaborators are bestowed an Erd\H{o}s number of 2, and so on.

\subsection{Historical influences}
% Discussion of the historical influences that affected him

After the establishment of the Austro-Hungarian empire in 1867, many Jewish people immigrated to Hungary. The newly semi-independent Hungary had recently repealed some anti-Semitic laws, making Hungary a favorable place for Jewish people to live and work \cite[p. 22]{open}. By 1900, Budapest was the sixth-largest city in Europe with a population of over 750,000 \cite[p. 21]{open}. Jewish Hungarians dominated commerce as well as the sciences and the arts, which Schechter speculates is due to the high value they placed on education and the overall strength of the Hungarian education system \cite[p. 23]{open}.

\medskip

Though Erd\H{o}s never strictly adhered to the Jewish faith, he identified culturally as Jewish and was subject to discrimination. He was born in 1913, just before the outbreak of World War I and near the end of Hungary's golden age of intellectual achievement and tolerance. When he was very young, his life was first touched by war when his father was drafted to fight the Russians in World War I. His father then was taken as a prisoner of war and worked for six years in a hard labor camp in Siberia, returning to Hungary in 1920 \cite[p. 42]{open}

\medskip

While his father was gone, Erd\H{o}s' mother tried to shield him from Hungary's political instability. When the dual Austro-Hungarian monarchy dissolved in 1918, Hungary briefly became a Soviet commune. Violence was used to enforce Communist ideals in the ``Red Terror.'' Then, when the Communist regime was overthrown, an even more violent ``White Terror'' began under the new government of Hungary. Because a few prominent Jewish men had been Soviet commissioners, Hungarian Jews began suffering anti-Semitic persecution \cite[p. 42]{open}. Erd\H{o}s once reflected on these circumstances, ``At twelve I knew that eventually I'd have to leave Hungary because I am a Jew" \cite[p. 56]{open}. 

\medskip

After achieving his PhD in Hungary, his parents had hoped he would continue his studies at a prestigious university in Germany. However, due to the deteriorating political situation there, Erd\H{o}s instead accepted a fellowship at the University of Manchester in England  \cite[p. 91]{open}. When Hitler annexed Austria in March 1938 and then threatened to annex the Sudetenland, Erd\H{o}s knew that he needed to leave Hungary. In September of that year, he immigrated to America, leaving many friends and family behind \cite[p. 100]{open}.

\medskip

In addition to forcing his emigration from his homeland, World War II affected the work that Erd\H{o}s did in America. Though he spent his days doing math, his mind was back in Hungary. When Hitler invaded Hungary in 1944 and subsequently murdered almost half a million Jewish Hungarians, Erd\H{o}s was left wondering which of his family and friends might have survived. In 1945 he learned that his beloved mother had survived but that the vast majority of the other friends and family he had left behind perished in the tragedy \cite[p. 132]{open}.

\medskip

When Erd\H{o}s finally returned to Hungary in 1948, a full ten years after he left, he realized that again he could not stay. Hungary was caught in the middle of the Cold War, and the Soviets had just begun a series of gruesome public trials \cite[p. 161]{open}. However, he did not have a second home in the United States either. In 1954, he was investigated under McCarthyist policies for possible Communist connections. Lacking the savvy to say the right things in his interview, he was prevented from reentering the United States until 1963 \cite[pp. 163-167]{open}. The only country that would permit him to enter was Israel because of their constitutional Law of Return for any Jewish people. He accepted this new citizenship and taught at the Technion in Haifa, where he eventually donated most of the money from his Wolf Prize.

\medskip

These mobility restrictions and communication restrictions sometimes made it difficult for  Erd\H{o}s to conduct research with mathematicians in other parts of the world. However, despite the challenges and oppression he faced throughout his life, Erd\H{o}s was still able to contribute a large number of significant mathematical results, which is a testament to his commitment to pursuing math above all else.

\begin{comment}
Though math was the activity that gave his life meaning, Erd\H{o}s was fully 
history, politics \cite[p. 90]{open}
climbing towers CLIMBING TOWER \cite[p 131]{open}
ping-pong \cite[p. 192]{open}
chess
1971: anyuka dies \cite[pp. 188-189]{open}

\end{comment}

\section{Erd\H{o}s' work}
% Overview of erdos's mathematical contribution

What is most remarkable about Erd\H{o}s' $1,500$ papers written with close to $500$ distinct collaborators is the sheer breadth of fields that Erd\H{o}s touched. In one anecdote from his days at the Institute for Advanced Study at Princeton, Erd\H{o}s was walking through a common room when he heard two famous mathematicians---Witold Hurewicz and Henry Wallman---trying to determine the dimension of the set of the rational points in Hilbert space. Completely unfamiliar with this kind of topology, Erd\H{o}s asked them to explain their problem and then to define the concept of dimension. Within an hour he returned to where Hurewicz and Wallman were still debating whether the answer was $0$ or $\infty$. Erd\H{o}s had proved independently and correctly that the answer was $1$. Upon the publication of the result, his friend Paul Halmos commented that Erd\H{o}s had again made an ``important contribution to a subject that a few months earlier [he] knew nothing about'' \cite[pp. 103-104]{open}.

\medskip

Some of his other important results took months or years to sort out. In college, Erd\H{o}s discovered an elementary proof of Bertrand's Postulate, discussed below, that was so elegant that it was ``straight from The Book" of the SF's perfect proofs. While at university, he also proved Schur's difficult conjecture that the density of abundant numbers is greater than zero \cite{encyclopedia}. With his friends Esther Klein, George Szekeres, and Paul Tur\'{a}n  at the Anonymous statue, he considered a problem in combinatorial geometry called ``The Happy End Problem'' which involved creating convex polygons out of different numbers of points. This led to the Party Problem, which is a foundational problem in Ramsey Theory. Simply stated, the question asks how large a party must be for either $n$ people to all know each other or for $n$ people to all be strangers  \cite[p. 85]{open}. Much later on in his life, Erd\H{o}s continued developing Ramsey Theory and random graphs in important work with Alfred Renyi \cite[pp. 153-156]{open}.

\medskip

Erd\H{o}s' contributions to number theory are particularly impressive. His introduction of probabilistic methods was revolutionary to the field \cite[p. 112]{open}. In one of his greatest but most controversial contributions, he found an elementary proof of the important Prime Number Theorem at the same time as Atle Selberg, for which Selberg was later awarded the Fields Medal \cite[pp. 143-151]{open}. Erd\H{o}s also made significant contributions throughout his life to related fields \cite[p. 97]{open}. For example, he contributed important work on distinct distances in graph theory \cite{encyclopedia} and on inaccessible cardinals in set theory \cite{encyclopedia}. 

\medskip

Erd\H{o}s' wide reach in mathematics was largely due to his willingness to work with anyone who had a good idea, no matter their age or their level of prestige. In one whimsical example, University of Georgia professor Carl Pomerance discovered that the sums of the prime factors of 714 and 715 were equal. These numbers had been in the news frequently during 1974 as Hank Aaron attempted to break Babe Ruth's record of 714 career home runs. Pomerance called consecutive numbers with the same sum of prime factors ``Ruth-Aaron pairs'' and published a short paper with Carol Nelson and David Penney in an obscure journal about the result. One day, he received a call from Erd\H{o}s, who had seen the paper and who had an idea for proving the density of Ruth-Aaron pairs \cite[pp. 179-180]{open}. This began a series of collaborations that resulted in more than twenty coauthored papers, and there are dozens of similar stories \cite{boy}.

\subsection{Proof of Bertrand's Postulate}

In 1845, French mathematician Joseph Bertrand  postulated that for any integer $n > 3$, there exists prime $p$ such that $n < p < 2n-2$ \cite{bertrand}. This is commonly restated in the following slightly weaker form:

\begin{theorem}
\label{bertrand}
Let $n \ge 2$ be an integer. Then there exists prime $p$ such that $n < p < 2n$.
\end{theorem}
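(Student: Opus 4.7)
The plan is to follow Erd\H{o}s's celebrated argument via careful estimates on the central binomial coefficient $\binom{2n}{n}$. Suppose for contradiction that some $n \geq 2$ admits no prime $p$ with $n < p < 2n$; the goal is to derive upper and lower bounds on $\binom{2n}{n}$ that are inconsistent for all sufficiently large $n$, and then to verify the finitely many small cases by hand.

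First I would establish the elementary lower bound $\binom{2n}{n} \geq 4^n/(2n+1)$, which follows because $\binom{2n}{n}$ is the largest among the $2n+1$ summands of $(1+1)^{2n} = 4^n$. Next I would analyze the prime factorization via Legendre's formula: the exponent of a prime $p$ in $\binom{2n}{n}$ equals $\sum_{i\geq 1}\bigl(\lfloor 2n/p^i\rfloor - 2\lfloor n/p^i\rfloor\bigr)$. Each summand is $0$ or $1$, and terms with $p^i > 2n$ vanish, so $p^{\ord_p \binom{2n}{n}} \leq 2n$ for every prime $p$; in particular, primes $p > \sqrt{2n}$ contribute at most the first power of $p$.

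The heart of Erd\H{o}s's trick is to observe that no prime $p$ in the range $(2n/3,\, n]$ divides $\binom{2n}{n}$ at all (for $n \geq 3$): such a $p$ contributes exactly the two factors $p$ and $2p$ to the numerator $(2n)!$ and the same two factors to the denominator $(n!)^2$, so they cancel. Combined with the standing assumption that no prime lies in $(n,2n)$, the factorization of $\binom{2n}{n}$ is supported on primes $p \leq 2n/3$. Separating the contribution of prime powers with $p \leq \sqrt{2n}$ and invoking an auxiliary lemma $\prod_{p\leq m} p \leq 4^{m-1}$ (proved by induction on $m$ using the divisibility properties of $\binom{2m+1}{m}$) then yields
\[
\binom{2n}{n} \;\leq\; (2n)^{\sqrt{2n}} \cdot 4^{2n/3}.
\]

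Comparing this with the lower bound produces $4^{n/3} \leq (2n+1)(2n)^{\sqrt{2n}}$, whose right-hand side is sub-exponential in $n$ while the left-hand side grows exponentially, so the inequality must fail beyond an explicit threshold. The hard part, I expect, will be the careful bookkeeping behind the auxiliary estimate $\prod_{p\leq m}p \leq 4^{m-1}$ and the cancellation argument for primes in $(2n/3,\, n]$, since these combinatorial bounds must be tight enough to beat the lower bound on $\binom{2n}{n}$ with margin to spare. The finitely many small $n$ below the threshold are handled by Bertrand's classical device: exhibit a chain of primes $2, 3, 5, 7, 13, 23, 43, 83, 163, 317, 631$, each less than twice its predecessor, so that each such $n$ automatically has a prime in $(n, 2n)$, completing the proof.
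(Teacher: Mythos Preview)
Your proposal is correct and follows essentially the same route as the paper: lower-bound $\binom{2n}{n}$ via the binomial theorem, control its prime factorization using Legendre's formula (Lemmas on $p^b\le 2n$, on primes $p>\sqrt{2n}$, and on the vanishing of the exponent for $p\in(2n/3,n]$), apply the Chebyshev-type bound $\prod_{p\le m}p\le 4^m$, derive a contradiction for large $n$, and finish small $n$ with an explicit chain of primes. The only cosmetic differences are that the paper states the lower bound as $2n\binom{2n}{n}>2^{2n}$ and takes the threshold at $n\ge 4000$, extending the prime chain to $1259,\,2503,\,4001$ rather than stopping at $631$; your shorter chain is fine provided you actually verify the inequality $4^{n/3}>(2n+1)(2n)^{\sqrt{2n}}$ for all $n\ge 631$, which you have left implicit.
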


Bertrand verified his postulate for $n$ up to three million. In 1850, Russian mathematician Pafnuty Chebychev first proved this postulate analytically \cite{cheby}. More than eighty years later, while just an eighteen-year-old student at P\'{a}zm\'{a}ny P\'{e}ter Tudom\'{a}nyegyetem, Erd\H{o}s independently developed a ``neater" proof of Bertrand's Postulate. Though he later found out that the Indian mathematician Srinivasa Ramanujan had a similar proof buried in his collected papers, independently proving this postulate was Erd\H{o}s' first major achievement \cite[pp. 59-62]{open}. Building on this proof, Erd\H{o}s made a number of important discoveries that earned him his doctorate and helped to establish the legitimacy of elementary methods in number theory research. After the following detailed discussion of his proof, I will discuss the significance of his work more thoroughly.

\medskip

Erd\H{o}s presents his proof of Bertrand's Postulate in his 1932 paper ``Beweis eines Satzes von Tschebyshef," \cite{erdos32} which was published in German in an obscure Hungarian journal. Erd\H{o}s neatly structures this proof into six sections, the first five of which provide the elementary proof of Bertrand's Postulate. Erd\H{o}s assumed for contradiction that there is no prime $p \in (n, 2n)$ and then found a contradiction for large $n$ using lemmas he proved in the first three sections. 

\medskip
Erd\H{o}s' proof is worth exploring because it is similar to today's preferred proof of Bertrand's Postulate and because it is simple enough for an undergraduate to understand while remaining a deep proof. In the following subsections, I carefully unpack the structure of his original proof and the general ideas behind his approach. However, in some sections I neaten his arguments for clarity based on a number of modern sources that have reevaluated his arguments \cite{thebook}\cite{cuttheknot}\cite{galvin}\cite{nielsen}.

\subsection{Erd\H{o}s' proof parts one and two}
In the first two sections of his proof, Erd\H{o}s proves the following lemma of Chebychev:
\begin{lemma}[Chebychev bound] \label{chebybound}
For integers $n \ge 2,$ $\prod_{p\le n}{p} \le 4^n$
\end{lemma}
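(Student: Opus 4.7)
The plan is to prove the Chebyshev bound by strong induction on $n$, splitting into cases based on the parity of $n$ and exploiting a well-chosen central binomial coefficient in the odd case. The base cases $n=2$ (where $\prod_{p\le 2} p = 2 \le 16$) and $n=3$ (where $\prod_{p\le 3} p = 6 \le 64$) are direct. For the inductive step, suppose the bound holds for all integers $2 \le k < n$.

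If $n \ge 4$ is even, then $n$ itself is not prime, so $\prod_{p \le n} p = \prod_{p \le n-1} p \le 4^{n-1} \le 4^n$ by the induction hypothesis applied to $n-1$. This case is essentially free. The substantive case is $n = 2m+1$ odd with $m \ge 1$. Here I would split the product at $m+1$:
\begin{equation*}
\prod_{p \le 2m+1} p \;=\; \Bigl(\prod_{p \le m+1} p\Bigr) \cdot \Bigl(\prod_{m+1 < p \le 2m+1} p\Bigr).
\end{equation*}
The first factor is bounded by $4^{m+1}$ by induction (valid since $m+1 \le 2m < n$ for $m \ge 1$), so it remains to show the second factor is at most $4^m$.

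The key trick is the binomial coefficient $\binom{2m+1}{m} = \frac{(2m+1)!}{m!\,(m+1)!}$. Any prime $p$ with $m+1 < p \le 2m+1$ appears exactly once in the numerator $(2m+1)!$ and does not divide either factorial in the denominator, hence $p \mid \binom{2m+1}{m}$. Since these primes are distinct, their product divides $\binom{2m+1}{m}$, giving
\begin{equation*}
\prod_{m+1 < p \le 2m+1} p \;\le\; \binom{2m+1}{m}.
\end{equation*}
To bound $\binom{2m+1}{m}$, I would use the symmetry $\binom{2m+1}{m} = \binom{2m+1}{m+1}$, so these two equal terms together contribute at most the full sum $\sum_{k=0}^{2m+1} \binom{2m+1}{k} = 2^{2m+1}$, yielding $\binom{2m+1}{m} \le 2^{2m} = 4^m$. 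Combining the two factors gives $\prod_{p \le 2m+1} p \le 4^{m+1} \cdot 4^m = 4^{2m+1} = 4^n$, completing the induction.

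The main obstacle, conceptually, is finding the right combinatorial gadget that simultaneously (i) is divisible by every prime in a large interval and (ii) admits a clean upper bound in terms of $4^{\text{something}}$; the asymmetric coefficient $\binom{2m+1}{m}$ is exactly engineered for this, since its symmetry halves the trivial bound $2^{2m+1}$ to the $4^m$ we need to match the induction. Everything else is bookkeeping on parity and base cases.
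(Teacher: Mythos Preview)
Your proof is correct and follows essentially the same approach as the paper: strong induction on $n$, with the even case being trivial (since even $n>2$ is not prime) and the odd case $n=2m+1$ handled by splitting the product at $m+1$, showing that $\prod_{m+1<p\le 2m+1} p$ divides $\binom{2m+1}{m}$, and bounding $\binom{2m+1}{m}\le 4^m$ via the symmetry $\binom{2m+1}{m}=\binom{2m+1}{m+1}$ in the binomial expansion of $2^{2m+1}$. The only cosmetic difference is that the paper frames the induction as going from $n$ to $n+1$, while you induct directly on $n$; the substance is identical.
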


Erd\H{o}s' approach requires two steps. In the first section he proves that $( \prod_{n < p \le 2n}p ) < 2^{2(n-1)}$. Then, in the second section, he shows that this implies that $( \prod_{10 < p \le n}p ) < 2^{2n}$. This two-step process for proving the Chebychev lemma is less elegant than the induction argument presented below, which is drawn primarily from Nielsen \cite{nielsen}.

\begin{proof}

We can use induction, proving the inductive step in two cases: where $n$ is odd and $n$ is even. The base case $n=2$ is easily verified.

\begin{enumerate}[\upshape (i)]
\item \label{o} For an odd integer $n = 2m + 1,$ let $\displaystyle P = \prod_{p\le n}{p} \le 4^n$. For the following (even) integer $n+1 = 2(m+1), \displaystyle \prod_{p \le n+1}{p}$ also equals $P$ because $n+1$ is not prime. Thus $\displaystyle \prod_{p \le n+1}{p} \le 4^{n+1}$.
\item \label{e} 
For an even integer $n = 2m$, we assume that $\displaystyle \prod_{p\le k}{p} \le 4^k$ for all $2 < k \le n$. We aim to show that for the following (odd) integer $n+1 = 2m+1$, $\displaystyle \prod_{p\le n+1}{p} \le 4^{n+1}$. First note that
\[
\left ( \displaystyle \prod_{p\le 2m+1}{p}\right ) = \left ( \displaystyle \prod_{p\le m+1}{p} \right ) \left (\prod_{m+1 < p \le 2m+1}{p} \right ).
\]

$\left ( \displaystyle \prod_{p\le m+1}{p} \right ) \le 4^{m+1}$ by the inductive hypothesis, since $m+1 \le n$.

$\left ( \displaystyle \prod_{m+1 < p \le 2m+1}{p} \right )$ divides $\displaystyle\binom{2m+1}{m}$ because the 
numerator of $\displaystyle \binom{2m+1}{m}$ = $\displaystyle \frac{(2m+1)(2m)...(m+2)}{m(m-1)...(1)}$ contains all integers between $m+2$ and $2m+1$, including all primes in this range, and the denominator contains none.

Therefore, $\left ( \displaystyle \prod_{m+1 < p \le 2m+1}{p} \right ) \le \displaystyle \binom{2m+1}{m}$.

Consider the binomial expansion
\[
(1+1)^{2m+1} = \binom{2m+1}{0}\cdot 1^{2m+1}\cdot(1) + \ldots + \binom{2m+1}{m}\cdot 1^{m+1}\cdot(1^m) + \binom{2m+1}{m+1}\cdot 1^{m}\cdot(1^{m+1})\ldots
\]

Note that 
\[
\binom{2m+1}{m}\cdot 1^{m+1}\cdot(1^m) =  \binom{2m+1}{m+1}\cdot 1^{m}\cdot(1^{m+1}).
\]

Each of these identical terms can contribute at most half of the total value of the binomial, so we have
\[
\binom{2m+1}{m}\cdot 1^{m+1}\cdot(1^m) \le \frac{(1+1)^{2m+1}}{2}
\]
\[
\binom{2m+1}{m} \le \frac{2^{2m+1}}{2} = 2^{2m}.
\]

Therefore, $\left ( \displaystyle \prod_{m+1 < p \le 2m+1}{p} \right ) \le \displaystyle\binom{2m+1}{m} \le 4^{m}.$

Finally,
\[
\left ( \displaystyle \prod_{p\le 2m+1}{p}\right ) \le \left ( 4^{m+1} \right ) \left ( 4^{m} \right ) = 4^{2m+1}.
\]
\end{enumerate}    
\end{proof}

\subsection{Erd\H{o}s' proof part three}
In the third section of his proof, Erd\H{o}s proves three lemmas using the Lengendre formulas for prime factorization of factorials and a fourth lower-bound lemma. 
\begin{lemma} \label{twon}
For prime $p$, define $b(n,p)$ to be the largest number $b$  such that $p^b$ divides $ \binom{2n}{n}$. Then $p^b \le 2n$ for all $p$.
\end{lemma}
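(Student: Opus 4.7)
The plan is to apply Legendre's formula for the $p$-adic valuation of a factorial, which gives
\[
v_p(m!) = \sum_{i=1}^{\infty} \left\lfloor \frac{m}{p^i} \right\rfloor,
\]
and then write $b(n,p) = v_p((2n)!) - 2\,v_p(n!)$ as a single sum of terms of the form $\left\lfloor 2n/p^i \right\rfloor - 2\left\lfloor n/p^i \right\rfloor$.

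The key arithmetic observation I would establish first is the simple floor inequality: for any real $x \ge 0$, the quantity $\lfloor 2x \rfloor - 2\lfloor x \rfloor$ equals $0$ or $1$. This follows by writing $x = \lfloor x\rfloor + \{x\}$ with $\{x\} \in [0,1)$, so that $\lfloor 2x \rfloor - 2\lfloor x \rfloor = \lfloor 2\{x\} \rfloor \in \{0,1\}$. Applied termwise with $x = n/p^i$, this shows every summand in the Legendre expression for $b(n,p)$ is bounded by $1$.

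Next I would truncate the sum. Once $p^i > 2n$, we have $\lfloor 2n/p^i \rfloor = \lfloor n/p^i \rfloor = 0$, so those terms vanish. Let $s$ be the largest integer with $p^s \le 2n$; then only the indices $i = 1, 2, \ldots, s$ can contribute, each at most $1$, and consequently $b(n,p) \le s$. Raising $p$ to both sides yields $p^{b(n,p)} \le p^s \le 2n$, which is exactly the claim.

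The only step requiring any real care is the floor identity $\lfloor 2x \rfloor - 2\lfloor x \rfloor \in \{0,1\}$ together with verifying that it is legitimate to insert it termwise into the Legendre sums; everything else is bookkeeping. Because the whole argument reduces to a short counting bound on nonzero terms, I expect no genuine obstacle, and this lemma will serve as the crucial input when Erd\H{o}s later splits $\binom{2n}{n}$ according to how large the prime $p$ is relative to $\sqrt{2n}$.
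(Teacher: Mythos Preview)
Your proposal is correct and follows essentially the same route as the paper: apply Legendre's formula to write $b(n,p)=\sum_{i\ge 1}\bigl(\lfloor 2n/p^i\rfloor - 2\lfloor n/p^i\rfloor\bigr)$, observe each term lies in $\{0,1\}$, and note that only the indices $i$ with $p^i\le 2n$ contribute, giving $b(n,p)\le \log_p(2n)$ and hence $p^{b(n,p)}\le 2n$. The only cosmetic difference is that the paper argues the $\{0,1\}$ bound by a case split on whether the fractional part of $n/p^i$ is at least $1/2$, whereas your identity $\lfloor 2x\rfloor - 2\lfloor x\rfloor = \lfloor 2\{x\}\rfloor$ is a slightly slicker packaging of the same fact.
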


\begin{proof}
Note that the power $a$ of $p$ in the prime factorization of $n!$ is given by
\[
a(n,p) = \displaystyle \sum_{i=1}^{\infty} \left \lfloor { \frac{n}{p^i}} \right \rfloor,
\]
and that the corresponding power of $p$ in the prime factorization of $\frac{1}{n!}$ is given by $-a(n,p)$.

\medskip

Recall that  $ \displaystyle\binom{2n}{n} = \displaystyle \frac{2n!}{(n!)(n!)} = \left ( 2n!\right ) \left ( \displaystyle\frac{1}{n!} \right ) \left (\displaystyle \frac{1}{n!} \right )$. Applying the above formula, we have
\[
b(n,p) = \left ( \displaystyle \sum_{i=1}^{\infty} \left \lfloor { \frac{2n}{p^i}} \right \rfloor  \displaystyle - \sum_{i=1}^{\infty} \left \lfloor{ \frac{n}{p^i}} \right \rfloor \displaystyle - \sum_{i=1}^{\infty} \left \lfloor{ \frac{n}{p^i}} \right \rfloor \right ) \mbox{,  which gives}
\]
\[
b(n,p) = \displaystyle  \sum_{i=1}^{\infty}\left ( \left \lfloor { \frac{2n}{p^i}} \right \rfloor - \displaystyle 2  \left \lfloor{ \frac{n}{p^i}} \right \rfloor \right ).
\]

Next, notice that if $\displaystyle  \left ( \frac{n}{p^i} - \left\lfloor \frac{n}{p^i} \right \rfloor \right ) \ge \frac{1}{2}$ then $\displaystyle \left \lfloor  \frac{2n}{p^i} \right \rfloor =  2  \left \lfloor \frac{n}{p^i} \right \rfloor + 1$. In this case, the summand
 $\displaystyle \left ( \left \lfloor  \frac{2n}{p^i} \right \rfloor -  2  \left \lfloor \frac{n}{p^i} \right \rfloor \right )$ equals $1$. On the other hand, if $\displaystyle  \left ( \frac{n}{p^i} - \left\lfloor \frac{n}{p^i} \right \rfloor \right ) < \frac{1}{2}$ then $\displaystyle \left \lfloor  \frac{2n}{p^i} \right \rfloor =  2  \left \lfloor \frac{n}{p^i} \right \rfloor$. In this case, the summand equals 0. Therefore,
 \begin{equation}\label{eq:in01}
 \displaystyle \left ( \left \lfloor  \frac{2n}{p^i} \right \rfloor -  2  \left \lfloor \frac{n}{p^i} \right \rfloor \right ) \in \{0, 1\}.
 \end{equation}
 Now compare the growth of the variables in the summand:
 \[ \mbox{Suppose } p^i > 2n.
 \]
 This is the case if and only if  $i > \displaystyle \frac{\log{2n}}{\log{p}},$ which gives
 \[ i > \log_p{2n}.
 \]
This implies that when $i > \log_p{2n}$, the corresponding summand contributes $0$ to $b(n,p)$. From equation (\ref{eq:in01}), when $i \le \log_p{2n}$, the corresponding summand of $b$ equals $0$ or $1$. Therefore, the maximum value $b$ can obtain is $\log_p{2n}.$ After some rearranging using the definition of a logarithm, we have the result:
\[
p^b \le 2n.
\]
\end{proof}

\begin{lemma} \label{root2n}
$b(n,p)$ is less than or equal to 1 for all primes $p > \sqrt{2n}$
\end{lemma}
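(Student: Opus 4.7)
The plan is to leverage Lemma \ref{twon} directly, which already gives the inequality $p^{b(n,p)} \le 2n$. If I assume $p > \sqrt{2n}$, then squaring both sides gives $p^2 > 2n \ge p^{b(n,p)}$, and since the exponential function base $p > 1$ is strictly increasing, this forces $b(n,p) < 2$. Because $b(n,p)$ is a nonnegative integer by its definition as the exponent of $p$ in the prime factorization of $\binom{2n}{n}$, the conclusion $b(n,p) \le 1$ follows immediately.

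As a sanity check, I would verify the same conclusion directly from the sum representation derived in the proof of Lemma \ref{twon}, namely
\[
b(n,p) = \sum_{i=1}^{\infty}\left(\left\lfloor \frac{2n}{p^i} \right\rfloor - 2\left\lfloor \frac{n}{p^i} \right\rfloor \right).
\]
The hypothesis $p > \sqrt{2n}$ is equivalent to $p^2 > 2n$, so for every $i \ge 2$ we have $p^i \ge p^2 > 2n$, and therefore both $\lfloor 2n/p^i \rfloor$ and $\lfloor n/p^i \rfloor$ vanish. Only the $i=1$ term can contribute, and by equation (\ref{eq:in01}) that term is either $0$ or $1$. Either computation gives $b(n,p) \le 1$.

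There is essentially no obstacle here; the lemma is a short corollary of Lemma \ref{twon}. The only subtlety worth flagging is the implicit use of the integrality of $b(n,p)$ when passing from the strict inequality $b(n,p) < 2$ to $b(n,p) \le 1$, which is why I would prefer to present both arguments: the one-line deduction from Lemma \ref{twon} for brevity, and the sum-based argument as a self-contained verification that also makes the role of the hypothesis $p > \sqrt{2n}$ transparent.
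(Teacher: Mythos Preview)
Your proposal is correct. Your second (sum-based) argument is exactly the paper's proof: the paper recalls the formula for $b(n,p)$, observes that for $p>\sqrt{2n}$ and $i\ge 2$ the terms $\lfloor 2n/p^i\rfloor$ and $\lfloor n/p^i\rfloor$ both vanish, and concludes $b(n,p)=\lfloor 2n/p\rfloor-2\lfloor n/p\rfloor\le 1$.

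Your first argument, deducing $b(n,p)<2$ directly from the inequality $p^{b(n,p)}\le 2n$ of Lemma~\ref{twon} together with $p^2>2n$, is a genuinely shorter route that the paper does not take. It is cleaner in that it treats Lemma~\ref{twon} as a black box, at the cost of relying on the integrality of $b(n,p)$ to pass from a strict inequality to $b(n,p)\le 1$; you flag this correctly. The paper's approach, by contrast, reopens the summation and shows that only the $i=1$ term survives, which has the side benefit of producing the exact expression $b(n,p)=\lfloor 2n/p\rfloor-2\lfloor n/p\rfloor$ used verbatim in the proof of Lemma~\ref{twothirds}. Either argument is fine here; presenting both, as you do, is entirely reasonable.
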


\begin{proof}
Recall from Lemma \ref{twon} that
\[
b(n,p) = \displaystyle  \sum_{i=1}^{\infty}\left ( \left \lfloor { \frac{2n}{p^i}} \right \rfloor - \displaystyle 2  \left \lfloor{ \frac{n}{p^i}} \right \rfloor \right )
\]

Note that $\frac{2n}{\sqrt{2n}^2}$ = 1, so for $p > \sqrt{2n}$ and $i \ge 2$, $\frac{2n}{p^i} < 1$. Therefore, $\left \lfloor \frac{2n}{p^i} \right \rfloor = 0$ for $p > \sqrt{2n}$ and all $i \ge 2$. $n \le 2n$ implies that the same holds true for $\left \lfloor \frac{n}{p^i} \right \rfloor$. Thus we have
\[
b(n,p) =  \left \lfloor { \frac{2n}{p^1}} \right \rfloor - 2  \left \lfloor{ \frac{n}{p^1}} \right \rfloor \le 1 \mbox{ for } p > \sqrt{2n}.
\]

\end{proof}

\begin{lemma} \label{twothirds}
$b(n,p)$ equals 0 for all primes $p$ with $\frac23 n < p \le n$ and $n \ge 2$
\end{lemma}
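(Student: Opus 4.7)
The plan is to substitute directly into the formula established in Lemma \ref{twon},
\[
b(n,p) = \sum_{i=1}^{\infty} \left( \left\lfloor \frac{2n}{p^i} \right\rfloor - 2 \left\lfloor \frac{n}{p^i} \right\rfloor \right),
\]
and to show that every summand is zero. I will first dispatch the $i = 1$ term by a direct floor computation and then argue that every term with $i \ge 2$ vanishes because $p^i$ is already larger than $2n$.

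For the $i = 1$ summand, the hypothesis $p \le n$ gives $n/p \ge 1$, and the hypothesis $p > \tfrac{2}{3}n$ gives $n/p < \tfrac{3}{2}$, so $\lfloor n/p \rfloor = 1$. The same two inequalities on $p$ yield $2 \le 2n/p < 3$, so $\lfloor 2n/p \rfloor = 2$. Hence the $i = 1$ summand equals $2 - 2\cdot 1 = 0$. Intuitively, this captures the fact that in $1, 2, \ldots, 2n$ the only multiples of $p$ are $p$ and $2p$, while in each copy of $n!$ the only multiple of $p$ is $p$ itself, so the factors cancel exactly.

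For $i \ge 2$, it suffices to establish $p^2 > 2n$, since then $p^i \ge p^2 > 2n$ forces $\lfloor 2n/p^i \rfloor = \lfloor n/p^i \rfloor = 0$. From $p > \tfrac{2}{3}n$ we obtain $p^2 > \tfrac{4}{9}n^2$, and this exceeds $2n$ whenever $n \ge 5$. For the leftover small cases $n = 3$ and $n = 4$, the only prime in the interval $(\tfrac{2}{3}n,\, n]$ is $p = 3$, and indeed $p^2 = 9 > 2n$. Thus $p^2 > 2n$ holds throughout the range covered by the lemma, every summand in the Legendre-type formula vanishes, and so $b(n,p) = 0$.

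The main subtlety I anticipate is exactly this bound $p^2 > 2n$ at the low end: the inequality $p > \tfrac{2}{3}n$ on its own is not strong enough for all $n$, so a brief case check at $n = 3, 4$ (using $p \ge 3$) is needed to close the argument. Everything else is a clean manipulation of floor functions applied to the formula already proved in Lemma \ref{twon}.
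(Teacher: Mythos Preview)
Your argument is correct and essentially identical to the paper's: both compute the $i=1$ summand directly from $\tfrac{2}{3}n < p \le n$, and both kill the $i\ge 2$ terms via $p^2 > 2n$ --- the paper phrases this as $\tfrac{2}{3}n > \sqrt{2n}$ for $n > \tfrac{9}{2}$ and then cites Lemma~\ref{root2n}, while you derive $p^2 > \tfrac{4}{9}n^2 > 2n$ for $n\ge 5$ directly, which is the same inequality. One small remark: you check $n=3,4$ but silently omit $n=2$; in fact the lemma as stated fails there (the interval $(\tfrac{4}{3},2]$ contains $p=2$, and $b(2,2)=1$ since $\binom{4}{2}=6$), so the paper's ``easily verified'' is a slip --- harmless, since the lemma is only ever applied for large $n$.
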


\begin{proof}
For $n>\frac92$, note that $\frac23n > \sqrt{2n}$. Then by Lemma \ref{root2n}, we know that
\[
b(n,p) =  \left \lfloor { \frac{2n}{p}} \right \rfloor - 2  \left \lfloor{ \frac{n}{p}} \right \rfloor \le 1.
\]
For $p > \frac23n$ and $p \le n$,
\[
  \frac{2n}{n} \le \frac{2n}{p} < \frac{2n}{\frac23 n}
\]
So, $\frac{2n}{p} \in [2, 3)$. Therefore, $\left \lfloor { \frac{2n}{p}} \right \rfloor = 2$.

Likewise, for $p > \frac23n$ and $p \le n$,
\[
  \frac{n}{n} \le \frac{n}{p} < \frac{n}{\frac23 n}
\]
So, $\frac{n}{p} \in [1, \frac32)$. Therefore, $\left \lfloor { \frac{n}{p}} \right \rfloor = 1$ and $2  \left \lfloor{ \frac{n}{p}} \right \rfloor = 2$.

Finally, for $p > \frac23n$ and $p \le n$, where $n > \frac92$, we have
\[
b(n,p) =  \left \lfloor { \frac{2n}{p}} \right \rfloor - 2  \left \lfloor{ \frac{n}{p}} \right \rfloor = 2-2 = 0.
\]
The cases $n<\frac92$, that is, $n$ equals $2$, $3$, or $4$, are easily verified.
\end{proof}

\begin{flushleft}\emph{Comment.} Note that
\end{flushleft}
\begin{equation*}
\binom{2n}{n} = \left ( \prod_{p \le \sqrt{2n}}{p^b} \right ) \left ( \prod_{\sqrt{2n} < p \le \frac23 n}{p^b} \right ) \left ( \prod_{\frac23 n < p \le n}{p^b} \right ) \left ( \prod_{n < p \le 2n}{p^b} \right )
\end{equation*}
By Lemma \ref{twon},  $\prod_{p \le \sqrt{2n}}{p^b} \le (2n)^{\sqrt{2n}}$ and by Lemma \ref{twothirds}, $\prod_{\frac23 n < p \le n}{p^b} = 1$. This gives
\begin{equation}\label{eq:binom}
\binom{2n}{n} \le \left ( (2n)^{\sqrt{2n}} \right ) \left ( \prod_{\sqrt{2n} < p \le \frac23 n}{p^b} \right ) \left ( \prod_{n < p \le 2n}{p^b} \right )
\end{equation}

\begin{lemma}[Lower bound lemma] \label{lowerbound}
$2n\binom{2n}{n} > 2^{2n}$ for $n \ge 2$
\end{lemma}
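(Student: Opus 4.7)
The plan is to apply the binomial expansion of $(1+1)^{2n}$ and exploit the fact that $\binom{2n}{n}$ is the maximum among the binomial coefficients $\binom{2n}{k}$. The naive bound, $2^{2n} = \sum_{k=0}^{2n}\binom{2n}{k} \le (2n+1)\binom{2n}{n}$, is too weak since it only yields $\binom{2n}{n} \ge 2^{2n}/(2n+1)$, one term short of what the lemma demands. The key refinement is to isolate and combine the two extreme terms $\binom{2n}{0}$ and $\binom{2n}{2n}$, which together contribute only $2$, and then bound the remaining $2n-1$ middle terms by $\binom{2n}{n}$ each.

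First I would write
\[
2^{2n} \;=\; (1+1)^{2n} \;=\; \binom{2n}{0} + \binom{2n}{2n} + \sum_{k=1}^{2n-1}\binom{2n}{k}.
\]
Second, I would use unimodality of the binomial row, namely $\binom{2n}{k} \le \binom{2n}{n}$ for every $k$, to obtain
\[
\sum_{k=1}^{2n-1}\binom{2n}{k} \;\le\; (2n-1)\binom{2n}{n}.
\]
Third, I would note $\binom{2n}{0} + \binom{2n}{2n} = 2$ and verify that $\binom{2n}{n} \ge \binom{4}{2} = 6 > 2$ for all $n \ge 2$. Adding the two bounds gives
\[
2^{2n} \;<\; \binom{2n}{n} + (2n-1)\binom{2n}{n} \;=\; 2n\binom{2n}{n},
\]
which is the desired conclusion.

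The main obstacle is really just bookkeeping: ensuring that the strict inequality is preserved and that the endpoint-grouping trick is justified. The estimate $\binom{2n}{0} + \binom{2n}{2n} < \binom{2n}{n}$ effectively converts the $2n+1$ terms of the expansion into $2n$ groups each bounded by $\binom{2n}{n}$, which is exactly the factor needed. The base-case check $\binom{2n}{n} > 2$ for $n \ge 2$ is immediate and handles the only place where the argument could degenerate.
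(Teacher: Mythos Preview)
Your proof is correct and follows essentially the same approach as the paper: both combine the two extreme terms $\binom{2n}{0}+\binom{2n}{2n}=2$ into a single ``term,'' leaving $2n$ terms in the expansion of $(1+1)^{2n}$, and then bound each of these by the central coefficient $\binom{2n}{n}$. You are slightly more explicit than the paper in justifying the strict inequality via the check $\binom{2n}{n}>2$ for $n\ge 2$, but the argument is the same.
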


\begin{proof}
Consider the binomial expansion with 2n terms:
\[
(1+1)^{2n} = 2 + \binom{2n}{1}\cdot 1^{2n-1}\cdot 1^{1} + \dots +  \binom{2n}{2n-1} \cdot 1^1 \cdot 1^{2n-1}
\]
Note that $\binom{2n}{n}$ is the middle term of this expansion and thus the biggest. Replacing each term in this binomial expansion with $ \binom{2n}{n}$ increases the value of the sum to $2n\binom{2n}{n}$. So we have $2n\binom{2n}{n} > 2^{2n}$.
\end{proof}

\subsection{Erd\H{o}s' proof part four}
In the fourth section of his proof, Erd\H{o}s completes the argument for $n \ge 4000$ by assuming there are no primes $p \in (n, 2n)$ and seeking a contradiction.

Recall from equation (\ref{eq:binom}) that
\[
\binom{2n}{n} \le \left ( (2n)^{\sqrt{2n}} \right ) \left ( \prod_{\sqrt{2n} < p \le \frac23 n}{p^b} \right ) \left ( \prod_{n < p \le 2n}{p^b} \right ).
\]
Applying Lemmas \ref{chebybound} and \ref{root2n}, we have $\prod_{\sqrt{2n} < p \le \frac23 n}{p^b} \le 4^{\frac23 n}$. By assumption, there are no primes between $n$ and $2n$. This gives
\[
\binom{2n}{n} \le \left ( (2n)^{\sqrt{2n}} \right ) \left( 4^{\frac23 n} \right).
\]
We can multiply each side by $2n$ to get
\[
2n \cdot \binom{2n}{n} \le 2n \cdot \left ( (2n)^{\sqrt{2n}} \right ) \left( 4^{\frac23 n} \right).
\]
By Lemma \ref{lowerbound}, $2n\binom{2n}{n} > 2^{2n}$, which gives a statement that will be contradicted for large $n$:
\[
 2^{2n} < 2n \cdot \binom{2n}{n} \le 2n \cdot \left ( (2n)^{\sqrt{2n}} \right ) \left( 4^{\frac23 n} \right).
\]

\begin{lemma}
For $n \ge 4000$, $ 2^{2n} \ge  2n \cdot \left ( (2n)^{\sqrt{2n}} \right ) \left( 4^{\frac23 n} \right)$
\end{lemma}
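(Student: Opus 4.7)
The plan is to take logarithms of both sides so that the claim becomes a one-variable calculus problem. Since $4^{2n/3} = 2^{4n/3}$, dividing through and then applying $\log_2$ reduces the desired inequality to the equivalent statement
\begin{equation*}
\frac{2n}{3} \;\ge\; \bigl(\sqrt{2n}+1\bigr)\log_2(2n).
\end{equation*}
To expose the asymmetry between the two sides, I would substitute $t = \sqrt{2n}$, so that $t \ge \sqrt{8000}$ whenever $n \ge 4000$, and clear denominators to obtain the cleaner form $t^2 \ge 6(t+1)\log_2 t$.

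Next I would carry out the usual boundary-plus-monotonicity argument. A direct numerical check at $t = \sqrt{8000} \approx 89.44$ yields $t^2 = 8000$ while $6(t+1)\log_2 t$ is roughly $3500$, so the base case holds with substantial slack. For $t > \sqrt{8000}$ I would set $f(t) = t^2 - 6(t+1)\log_2 t$ and show $f'(t) > 0$. Differentiating gives
\begin{equation*}
f'(t) = 2t - 6\log_2 t - \frac{6(t+1)}{t\ln 2},
\end{equation*}
and for $t \ge 90$ the term $6\log_2 t$ is bounded by a modest constant (at most $42$ up to $t$ of several hundred) while the final fraction is bounded by a constant. Thus $f'(t)$ is dominated by $2t$ and stays positive, so $f$ remains nonnegative for all $t \ge \sqrt{8000}$, which is the desired inequality.

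The step I expect to be the main obstacle is calibrating the boundary value. The threshold $n \ge 4000$ is relatively tight: the polynomial factor $(2n)^{\sqrt{2n}}$ grows slowly in $n$, but not so slowly that one can afford wasteful estimates, and one must keep enough slack in the bounds on $\log_2 t$ and on $(t+1)/t$ near $t \approx 89$ to make the derivative argument conclude positivity at exactly $n = 4000$ rather than at some larger starting point. The rest of the proof, once that base case is pinned down, is a routine application of the observation that $t^2$ dominates $(t+1)\log_2 t$ for $t$ of this size.
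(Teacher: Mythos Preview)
Your approach is correct and in fact more careful than the paper's. The paper does not carry out your boundary-plus-monotonicity argument at all: it simply forms the ratio $2^{2n}/\bigl(2n\cdot(2n)^{\sqrt{2n}}\cdot 4^{2n/3}\bigr)$, simplifies it to $2^{2n/3}/(2n)^{1+\sqrt{2n}}$, takes logarithms, applies L'H\^opital's rule to show the limit is $+\infty$, and then asserts without further justification that ``$n\ge 4000$ is sufficiently large.'' Your plan---reduce via $\log_2$ and the substitution $t=\sqrt{2n}$ to $t^2\ge 6(t+1)\log_2 t$, verify the base case numerically at $t=\sqrt{8000}$, and show $f(t)=t^2-6(t+1)\log_2 t$ is increasing---actually pins down the threshold, which the paper's limit argument does not. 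The trade-off is that the paper's version is a two-line appeal to asymptotic growth rates, while yours requires a small amount of explicit calculus and a numerical check.

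One minor correction to your self-assessment: the threshold $n=4000$ is not nearly as tight as you fear. Your own base-case numbers ($8000$ versus roughly $3500$) already show more than a factor of two of slack, so you will not need delicate estimates on $\log_2 t$ or $(t+1)/t$; crude bounds suffice to push the derivative argument through from $t=\sqrt{8000}$ onward. Also, your parenthetical ``at most $42$ up to $t$ of several hundred'' is not literally right (e.g.\ $6\log_2 256=48$), but since $2t$ grows linearly this is harmless---just phrase it as $2t - 6\log_2 t$ being increasing and positive for $t\ge 90$ rather than bounding $6\log_2 t$ by a fixed constant.
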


\begin{proof}
Consider the limit:
\[
\lim_{n \to \infty} \frac{2^{2n}}{2n \cdot (2n)^{\sqrt{2n}} \cdot 4^{\frac23 n}} = \lim_{n \to \infty} \frac{2^{2n}}{(2n)^{1 + \sqrt{2n}} \cdot 2^{2\cdot \frac23 n}} = \lim_{n \to \infty} \frac{2^{\frac23 n}}{(2n)^{1 + \sqrt{2n}}}
\]

Taking the log of the numerator and denominator and applying L'H\^{o}pital's Rule gives
\[
\lim_{n \to \infty} \frac{2^{2n}}{2n \cdot (2n)^{\sqrt{2n}} \cdot 4^{\frac23 n}} = \lim_{n \to \infty} \frac{\frac23 \cdot \log{2}}{\frac{1+\sqrt{2n}}{2n} + \frac{\log{2n}}{2 \sqrt{2n}}} = \infty.
\]
Therefore, for sufficiently large $n$, 
\[
\frac{2^{2n}}{2n \cdot (2n)^{\sqrt{2n}} \cdot 4^{\frac23 n}} > 1.
\]
This implies that $ 2^{2n} \ge  2n \cdot \left ( (2n)^{\sqrt{2n}} \right ) \left( 4^{\frac23 n} \right)$. Here $n \ge 4000$ is sufficiently large.
\end{proof}
\subsection{Erd\H{o}s' proof part five}
In the fifth section, Erd\H{o}s demonstrates that Bertrand's Postulate holds for $n<4000$. He simply produces the following list of primes where each consecutive term is less than twice the previous term:
\[
2, 3, 5, 7, 13, 23, 43, 83, 163, 317, 631, 1259, 2503, 4001
\]
This proves that for $n < 4000$, there always exists prime $p$ such that $p \in (n, 2n)$, which completes the proof for all $n$.

\subsection{Significance of the problem}

Erd\H{o}s' proof of Bertrand's Postulate was important mathematically, personally, and philosophically. Mathematically, it led to even more significant results that formed the bulk of the PhD dissertation Erd\H{o}s completed after just four total years of university schooling \cite[p. 62]{open}. After proving Bertrand's Postulate, Erd\H{o}s extended his work to prove that for $n>7$, there are at least two primes $p_1, p_2 \in (n, 2n).$ Furthermore, $p_1 \equiv 3 \bmod{4}$ and $p_2 \equiv 1 \bmod{4}$ without loss of generality \cite[p. 62]{open}.

\medskip

Personally, this work was important because it began Erd\H{o}s' long tradition of corresponding with and collaborating with mathematicians all over the world. Schechter names a few of these early penpals---Louis Mordell, Richard Rado, and Harold Davenport in England, as well as Issai Schur in Berlin \cite[p. 62]{open}. As he gained renown among mathematicians throughout the world during this time, Erd\H{o}s earned the nickname ``the Magician from Budapest"  \cite[pp. 62-63]{open}.

\medskip

But Erd\H{o}s' proof of Bertrand's Postulate was most significant philosophically. Though it was just one of his early proofs, Erd\H{o}s had used elementary means to prove a substantive result in an elegant way. This proof was his first proof that was straight from the SF's ``Book," affirming his life's mission to discover other beautiful proofs in ``The Book.'' Further, this proof has continued to encourage mathematicians to consider elementary number theory and to seek elegance in all of their work.

% \bibliographystyle{amsplain}
% \bibliography{delicate}
\providecommand{\bysame}{\leavevmode\hbox to3em{\hrulefill}\thinspace}
\providecommand{\MR}{\relax\ifhmode\unskip\space\fi MR }
% \MRhref is called by the amsart/book/proc definition of \MR.
\providecommand{\MRhref}[2]{%
  \href{http://www.ams.org/mathscinet-getitem?mr=#1}{#2}
}
\providecommand{\href}[2]{#2}

\end{document}